\def\spc{\color[rgb]{1,0.2,0.2}}
\def\nc{\normalcolor}
\newtheorem {theorem}{Theorem}
\newcommand{\R}{{\mathbb{R}}}
\newcommand{\eps}{\varepsilon}
\begin{document}

\date{}

\title{Skew Brownian motion with dry friction: joint density approach}

\author{
Alexander Gairat\footnote{
Gearquant. Email address: agairat@gearquant.com
}\, and 
Vadim Shcherbakov\footnote{
 Royal Holloway,  University of London.
 Email address: vadim.shcherbakov@rhul.ac.uk
}
}

\maketitle

\begin{abstract}
{\small 
This note concerns the distribution of Skew Brownian motion with dry friction and its occupation time. 
 These distributions were  obtained in~\cite{Berezin} by using the  Laplace transform and joint characteristic functions. 
We provide an alternative approach, which is  based on the use of the joint density 
 for Skew Brownian motion, its last visit to the origin, its
local and occupation times derived in~\cite{Gairat}.
}
\end{abstract}

\noindent {{\bf Keywords:} Skew Brownian motion, Caughey-Dienes process, local time,  occupation time }

\section{Introduction}

Let $X_t=(X_t,\, t\geq 0)$  be a continuous time stochastic process defined as a 
solution of the following stochastic differential equation 
\begin{equation}
\label{X0}
X_t=X_0+ \int\limits_0^tm(X_s)ds+(2p-1)L_t+W_t,
\end{equation}
where $p\in (0, 1)$,
\begin{equation}
\label{m}
m(x)=m_11_{\{x\geq 0\}}+m_21_{\{x<0\}},\, x\in \R,
\end{equation}
for some constants $m_1$ and $m_2$,
\begin{equation}
\label{L}
L_t=\lim\limits_{\eps\to 0}\frac{1}{2\eps}\int\limits_0^t 1_{\{-\eps\leq X_s
\leq \eps\}}ds
\end{equation}
is the  local time of the process at zero, and $(W_t,\, t\geq 0)$ is standard Brownian motion (BM).
The process $X_t$ is well known. If $p=\frac{1}{2}$ and $m_1=m_2=0$, then 
$X_t$ is  standard BM.
If $m_1=m_2=0$,  then it is  Skew Brownian motion (SBM)  with parameter $p$
(e.g. see the survey~\cite{Lejay} and references therein).  
The distribution of the process $X_t$ and its functionals (e.g. local and occupation times etc) 
is of great interest in applications and attracted the attention of many researchers. 
For example, 
the trivariate density of BM, its local and occupation time  was obtained 
in~\cite{Karatzas} and applied to problems of stochastic control.
 The distribution of the process $X_t$, its local and occupation time
was obtained  in~\cite{Appu}  in the case $m_1=m_2$ and used to explain results of some
laboratory experiments for an advection-dispersion phenomenon. 
The process $X_t$ with the piecewise linear drift~\eqref{m} naturally appeared
in~\cite{Gairat} in the study of a two-valued local volatility model.
The latter is a generalisation of the log-normal model for the underlying price 
 on the case when the volatility of the price can take two different values. 
The joint density  of the process, its local and occupation times, and the last visit to the origin 
was obtained in~\cite{Gairat}  in an exact analytical form.  This result 
was  applied in that  paper to generalize  the Black-Scholes formula for the option price
on the case of the two-valued volatility.

In the case  \(m_2=-m_1=m\) the process  $X_t$ is also known as the   skew Caughey-Dienes process, 
or SBM with dry friction (e.g., see~\cite{Berezin} and references therein).  
Densities of both the skew Caughey-Dienes process
and its occupation time on the non-negative half-line were derived 
in~\cite{Berezin} by using the Laplace transform and joint characteristic functions,
which requires rather heavy  computations.
In this note we show  how  these distributions can be alternatively obtained  by using the results of~\cite{Gairat}.

\section{Results}

\subsection{The distribution of the skew Caughey-Dienes process}

In this section we derive the distribution of the skew Caughey-Dienes process.
This process is the solution of equation~\eqref{X0} in
the special case \(m_2=-m_1=m\), that is 
\begin{equation}
\label{X1}
dX_t=-m\cdot {\rm sgn}\left(X_t\right)+(2 p -1)dL_t+dW_t.
\end{equation}

\begin{theorem}
\label{T1}
Let $X_t=(X_t,\, t\geq 0)$  be the  solution of 
 equation~\eqref{X1}. 
If $X_0=0$, then given $T>0$ the density  of $X_T$ is 
\begin{equation}
\label{pdf-X}
\phi_T(x)
=\begin{cases}
2p\left(\frac{e^{-\frac{(m T+x)^2}{2 T}}}{\sqrt{2\pi T}}+\frac{m}{2} e^{-2 m x} 
\left[1+{\rm Erf}\left(\frac{m T-x}{\sqrt{2T}}\right)\right]\right),& \text{if } x\geq 0,\\
2q\left(\frac{e^{-\frac{(-m T+x)^2}{2 T}}}{\sqrt{2\pi T}} +\frac{m}{2} e^{2 m x} 
\left[1+{\rm Erf}\left(\frac{m T+x}{\sqrt{2T}}\right)\right]\right),& \text{if } x<0,
\end{cases}
\end{equation}
where $q=1-p$ and ${\rm Erf}(z)$ is the standard error function.
\end{theorem}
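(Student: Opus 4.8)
The plan is to obtain $\phi_T$ by integrating out the auxiliary variables (local time, occupation time, last visit to the origin) from the joint density established in~\cite{Gairat}. The first step is to invoke Girsanov's theorem to pass from the process $X_t$ solving~\eqref{X1} to a reference Skew Brownian motion: since the drift term $-m\,{\rm sgn}(X_s)$ is bounded, the Radon--Nikodym derivative on $[0,T]$ is
\[
\frac{d\Pb}{d\Qb}\bigg|_{\mathcal{F}_T}
=\exp\left(-m\int_0^T {\rm sgn}(X_s)\,dW_s-\frac{m^2}{2}T\right),
\]
and because $\int_0^T {\rm sgn}(X_s)\,dX_s$ can be rewritten via the Tanaka/occupation-time identity $\int_0^T {\rm sgn}(X_s)\,dX_s = |X_T|-|X_0|-(\text{local time contribution})$, the exponent becomes an explicit function of the triple $(X_T, L_T, \Gamma_T)$, where $\Gamma_T=\int_0^T 1_{\{X_s\geq 0\}}\,ds$ is the occupation time of the positive half-line. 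Concretely one expects the density of $X_T$ under~\eqref{X1} to be expressible as an integral against the joint law of $(X_T,L_T,\Gamma_T)$ (and the last visit $g_T$) under the SBM measure, weighted by $\exp(-m|X_T|+m|X_0|+\tfrac{m^2}{2}(2\Gamma_T-T)-\tfrac{m^2}{2}T)$ or a similar combination — the precise exponent is dictated by the sign bookkeeping and is the one genuinely delicate point.

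Second, I would substitute the explicit joint density from~\cite{Gairat}. With $X_0=0$, that density (for SBM with parameter $p$, its local time $\ell$ at zero, its occupation time $\gamma$ of $(0,\infty)$, and its last zero $g$) has a product-type structure reflecting the Markov property at the last visit: on $[g,T]$ the process is a Brownian meander of length $T-g$ started at $0$, decorated with the skewness factor $p$ or $q$ according to the sign of $X_T$, and on $[0,g]$ it contributes the local-time/occupation-time joint law of an excursion-straddling bridge. The key analytic fact I will use is the classical meander density: the endpoint of a Brownian meander of duration $u$ has density $\tfrac{x}{u}e^{-x^2/(2u)}$ on $x>0$, which already contains the $(mT+x)^2/(2T)$-type Gaussian core that must appear in~\eqref{pdf-X}.

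Third, I would carry out the integration in the order: first over the local time $\ell$, then over the last-visit time $g$ (equivalently the meander length $T-g$), then over the occupation time $\gamma$. Integrating over $\ell$ against the exponential local-time factor produces the factor $\tfrac{m}{2}$ and an exponential $e^{-2mx}$ (for $x\ge 0$) characteristic of the Laplace transform of a half-line local time; integrating over $g$ convolves a Gaussian kernel with the meander density and, after completing the square, yields the two summands — the pure Gaussian $\tfrac{1}{\sqrt{2\pi T}}e^{-(mT+x)^2/(2T)}$ and the $e^{-2mx}[1+{\rm Erf}((mT-x)/\sqrt{2T})]$ term — while the remaining $\gamma$-integral is a Gaussian integral that contributes the normalising constants, collapsing the skew weight to the overall prefactor $2p$ (resp.\ $2q$). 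By the sign symmetry $m\leftrightarrow -m$, $p\leftrightarrow q$, $x\leftrightarrow -x$ built into~\eqref{X1}, the case $x<0$ follows from the case $x\ge 0$ without separate computation.

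The main obstacle I anticipate is bookkeeping the Girsanov exponent correctly in terms of $(X_T,L_T,\Gamma_T)$: the drift $-m\,{\rm sgn}(X_s)$ is discontinuous at $0$, so one must be careful with the stochastic integral $\int_0^T{\rm sgn}(X_s)\,dW_s$, using the symmetric local time and the fact that the set $\{X_s=0\}$ has zero Lebesgue measure, to arrive at a clean expression $-m|X_T|+\tfrac{m^2}{2}(2\Gamma_T-2T)$ (or whatever the correct combination is) that, when multiplied against the~\cite{Gairat} density and integrated, reproduces~\eqref{pdf-X} exactly. A secondary technical point is verifying that the resulting iterated integrals converge and that Fubini applies, which follows from the boundedness of the drift and the Gaussian tails of all the densities involved.
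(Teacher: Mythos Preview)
Your high-level plan --- start from the joint density in~\cite{Gairat} and integrate out the auxiliary variables --- is exactly what the paper does, but two concrete pieces of your execution would not work as written.

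\textbf{The Girsanov exponent.} The result you need is already Theorem~2 of~\cite{Gairat}, which gives the joint density of $(\tau,V,X_T,L_T)$ for the process \emph{with} the piecewise drift; there is no need to redo the change of measure. Specialising that formula to $m_1=-m$, $m_2=m$, the exponential weight becomes
\[
\exp\!\Big(-\tfrac{m^2T}{2}+mL_T-m|X_T|\Big),
\]
with \emph{no} occupation-time dependence: since $m_1^2=m_2^2$, the term $\tfrac12\big(m_1^2V+m_2^2(T-V)\big)$ collapses to $\tfrac12 m^2T$. Your tentative exponent $-m|X_T|+\tfrac{m^2}{2}(2\Gamma_T-2T)$ is therefore wrong --- it omits the local-time contribution and inserts an occupation-time term that is absent here. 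This is not a bookkeeping nuisance but the structural simplification that drives the whole argument.

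\textbf{The integration order.} Because the weight is independent of $V$, the efficient route is to integrate $V$ \emph{first}, not last. The SBM factor is $h(v,lp)\,h(t-v,lq)\,h(T-t,x)$, and the first-passage densities convolve:
\[
\int_0^t h(v,lp)\,h(t-v,lq)\,dv=h(t,l),\qquad \int_0^T h(t,l)\,h(T-t,x)\,dt=h(T,l+|x|).
\]
Two applications of this identity collapse the quadruple density to the closed form $2p\,h(T,l+|x|)\,e^{-m^2T/2-m|x|+ml}$ (for $x\ge0$) with no genuine computation; only the final integral $\int_0^\infty(\cdots)\,dl$ requires work, and it yields~\eqref{pdf-X} directly. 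Your proposed order (local time first, then last visit, then occupation time) forfeits both convolution identities: integrating $\ell$ against $h(v,\ell p)\,h(t-v,\ell q)\,e^{m\ell}$ has no clean closed form, so the subsequent $g$- and $\gamma$-integrals would not produce the two-term structure of~\eqref{pdf-X} in the way you describe.
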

\begin{proof}
Fix $T>0$ and define the following quantities
\begin{align}
\label{tau}
\tau&=\max\{t\in (0,T]: X_t=0\},\\
\label{V}
V&=\int\limits_{\tau_0}^{\tau}1_{\{X_t\geq 0\}}dt,
\end{align}
where $\tau_0=\min \left\{t:X_t=0\right\}$,
and  the function 
\begin{equation}
\label{psi-p}
\psi_{p, T}(t,v, x, l)=\begin{cases}
2p\cdot h(v, lp)h(t-v, lq)h(T-t, x),& \text{if } x\geq 0,\\ 
2q\cdot h(v, lp)h(t-v, lq)h(T-t, x),& \text{if } x<0, 
\end{cases}
\end{equation} 
for $ 0\leq v\leq  t\leq T, l\geq 0$, 
where 
\begin{equation}
\label{h}
h(s, y)=\frac{|y|}{\sqrt{2\pi s^3}}e^{-\frac{y^2}{2s}},\,\, y\in \R, s\in \R_{+},
\end{equation}
is the  density  of the first passage time  to zero of the standard BM starting at $y$.
It was shown in~\cite[Theorem 2]{Gairat} that, if $X_0=0$, then 
 the joint  density of $\left(\tau, V, X_T, L_T\right)$ is 
 given by 
\begin{equation}
\label{phi}
\phi_{T}(t, v, x, l)
=\psi_{p, T}(t, v, x, l)e^{-\frac{m_1^2v+m^2_2(T-v)}{2}-l\left(m_1p-qm_2\right)+m(x)x},
\end{equation}
where $\psi_{p, T}(t,v,z,l)$ is defined by equation (\ref{psi-p}).
If $m_1=-m_2=-m$, then the function~\eqref{phi} simplifies as follows
\begin{equation}
\begin{split}
\phi_{T}(t, v, x, l)
&=\psi_{p, T}(t, v, x, l)e^{-\frac{m^2T}{2}-m\cdot x\cdot {\rm sgn}(x)+l\cdot m}\\
&=\begin{cases}
2p\cdot h(v, lp)h(t-v, lq)h(T-t, x)e^{-\frac{m^2T}{2}-m\cdot x+l\cdot m},&
 \text{if } x\geq 0,\\ 
2q\cdot h(v, lp)h(t-v, lq)h(T-t, x)e^{-\frac{m^2T}{2}+m\cdot x+l\cdot m},& \text{if } x<0,
\end{cases}
\end{split}
\end{equation}
for $ 0\leq v\leq  t\leq T, l\geq 0$.
Using the convolution property  of the hitting times of Brownian motion and the fact that $p+q=1$, 
 we have that 
$$\int_0^th(v, lp)h(t-v, lq)dv=h(t,l),$$
which gives 
the joint density of  $\left(\tau, X_T, L_T\right)$, namely, 
\begin{equation}
\phi_T(t,x,l)=\int_0^{t} \phi_T(t,v,x,l)dv=\begin{cases}
2p\cdot h(t,l )h(T-t,x)e^{-\frac{m^2T}{2}-x\cdot m+l\cdot m},& \text{if } x\geq 0,\\
2q\cdot h(t,l )h(T-t,x)e^{-\frac{m^2T}{2}+x\cdot m+l\cdot m},& \text{if } x\geq 0.
\end{cases}
\end{equation}
Using  the convolution property of the hitting times again gives that 
$$\int_0^Th(t,l )h(T-t,x)dt=h(T, l+|x|).$$ 
Therefore, the joint density  of $ X_T$ and $L_T^{\left(0\right)}$ is as follows
\begin{equation}
\phi_T(x,l)=\begin{cases}
2p\cdot  h(T,l+x )e^{-\frac{m^2T}{2}-m\cdot x+l\cdot m},& \text{ if  } x\geq 0,\\
2q\cdot  h(T,l-x )e^{-\frac{m^2T}{2}+m\cdot x+l\cdot m},& \text{ if  } x<0.
\end{cases}
\end{equation}
It is left to integrate out the local time in order to obtain the density  
of $X_T$. 
Integration gives that 
$$\int_0^{\infty}\phi_T(x,l)dl=\phi_T(x),$$
where $\phi_T(x)$ is the function defined in~\eqref{pdf-X}, as claimed.
\end{proof}

\subsection{The distribution of the occupation time}
Let $X_t=(X_t,\, t\geq 0)$ be the solution of equation~\eqref{X1}.
Given $T>0$ define 
\begin{equation}
\label{U}
U=\int\limits_{0}^T 1_{\{X_t\geq 0\}}dt,
\end{equation}
i.e. $U$ is  the occupation time  of the non-negative half-line during the time period 
$[0, T]$ (the occupation time).
In~\cite{Berezin} the density of the occupation time 
 is expressed in term of a double integral of a rather complicated function. 
We show that this density 
can be obtained  as  an  integral  of a function of one variable, which is explicitly 
 expressed in terms 
of the complementary error function ${\rm Erfc}(z)=1-{\rm Erf}(z)$.

Note first that  if $X_0=0$, then 
$U=V+T-\tau$, if $X_T\geq 0$, and $U=V$, if $X_T<0$, 
where quantities $\tau$ and $V$ are defined in~\eqref{tau} and~\eqref{V} respectively.
Therefore,  the joint density $\varphi_T(t,u, x, l)$ of  $(\tau, U, X_T, L_{T})$
is (see equation (15) in~\cite{Gairat}) 
\begin{equation}
\label{phi1}
\varphi_T(t,u, x, l)
=
\begin{cases}
2p\cdot h(u+t-T, lp)h(T-u, lq)h(T-t, x)e^{-\frac{m^2T}{2}-x\cdot m+l\cdot m},& \\
\mbox{if} \, \, x\geq 0,\,\, l>0,\,\, \mbox{and}\,\,t\leq T,\, T-t\leq u\leq T;&\\
2q\cdot h(u, lp)h(t-u, lq)h(T-t, x)e^{-\frac{m^2T}{2}+x\cdot m+l\cdot m},& \\
\mbox{if}\,  \, x< 0,\,\, l>0, \,\, \mbox{and} \,\, 0\leq u\leq t\leq T.
\end{cases}
\end{equation}
Using that 
\begin{align*}
\int_{T-u}^Th(u-T+t, lp)h(T-t, x)dt&=h(u, lp+x)\quad\text{for}\quad x\geq 0,\\
\int_{u}^Th(t-u, lq)h(T-t, x)dt&=h(T-u, lq+|x|)\quad\text{for}\quad x<0,
\end{align*}
we obtain the joint density $\varphi_T(u, x, l)$ of  $(U, X_T, L_{T})$.
Integrating over the variable $x$ gives the joint density of the occupation time $U$ and
 the local time $L_{T}$, that is 
\begin{equation}
\label{pdf2}
\varphi_T(u, l)=2e^{-\frac{m^2T}{2}+lm}\left(pF(u, l, p)+qF(T-u, l, q)\right),
\end{equation}
where 
\begin{equation}
F(y, l, c)=\frac{e^{-\frac{l^2c^2}{2y}}}{\sqrt{2\pi y}}-\frac{m}{2}
{\rm Erfc}\left[\frac{lc+my}{\sqrt{2y}}\right]e^{lmc+\frac{m^2}{2}y}.
\end{equation}
Thus, the density  of the occupation time is given by 
the  integral $\displaystyle{\varphi_T(u)=\int_0^{\infty}\varphi_T(u, l) dl}$, 
where  the function $\varphi_T(u, l)$  is explicitly expressed 
in terms of the complementary error function, as claimed.

\end{document}